\newfont{\aaa}{cmb10 at 19pt}
\newfont{\bbb}{cmb10 at 11pt}
\newtheorem{theorem}{Theorem}[section]
\theoremstyle{remark}
\numberwithin{equation}{section}
\begin{document}

\title[]{On multi-variable Zassenhaus formula}

\author{Linsong Wang}
\address{School of Mathematics, South China University of Technology, Guangzhou, Guangdong 510640, China}
\email{lslswls@qq.com}
\author{Yun Gao}
\address{Department of Mathematics and Statistics, York University, Toronto}
\email{ygao@yorku.ca}
\author{Naihuan Jing$^\dagger$}
\address{School of Mathematics, South China University of Technology, Guangzhou, Guangdong 510640, China}
\address{Department of Mathematics, North Carolina State University, Raleigh, NC 27695, USA}
\email{jing@math.ncsu.edu}
%\thanks{Corresponding author: jing@ncsu.edu}

%\keywords{Baker-Campbell-Hausdorff formula, Zassenhaus formula}

\begin{abstract}
In this paper, we give a recursive algorithm to compute the multivariable Zassenhaus formula
$$e^{X_{1}+X_{2}+\cdots +X_{n}}=e^{X_{1}}e^{X_{2}}\cdots e^{X_{n}}\prod_{k=2}^{\infty}e^{W_{k}}$$
and derive an effective recursion formula of $W_{k}$.
\end{abstract}

\thanks{{\scriptsize
\hskip -0.4 true cm
$\dagger$ Corresponding author: jing@math.ncsu.edu\\
MSC (2010): Primary: 16W25; Secondary: 22E05, 16S20\\
Keywords: Baker-Campbell-Hausdorff formula, Zassenhaus formula
}}

\maketitle

\section{Introduction}

The celebrated Baker-Campbell-Hausdorff (BCH) is a fundamental identity in Lie theory \cite{Baker,Campbell,Hausdorff} connecting Lie algebra with Lie group. %establishes the main property of the exponential map from the Lie algebra to its Lie group.
The BCH says that for any linear operators $X, Y$ in a bounded Hilbert space one has the formula
\begin{align}
e^{X}e^{Y}=e^{X+Y+\sum_{k=2}^{\infty}Z_{k}(X,Y)},
\end{align}
where $\exp$ is defined in the usual sense and $Z_k(X, Y)$ is a degree $k$ homogeneous Lie polynomial in the noncommutative variables $X$ and $Y$. The first few terms are %explicitly
\begin{align*}
&Z_{2}=\frac{1}{2}[X, Y], \quad Z_{3}=\frac{1}{12}([X, [X, Y]]-[Y, [X, Y]]), \\
&Z_{4}=\frac{1}{24}[X, [Y, [Y, X]]].
\end{align*}
and the general expressions of $Z_k(X, Y)$ can be explicitly computed by combinatorial formulas.

The dual form of the BCH is the famous Zassenhaus formula which establishes that the exponential $e^{X+Y}$ can be uniquely decomposed as
\begin{align}
e^{X+Y}&=e^{X}e^{Y}\prod_{m=2}^{\infty}e^{W_{m}(X, Y)}\\
&=e^{X}e^{Y}e^{W_{2}(X, Y)}e^{W_{3}(X, Y)}\cdots e^{W_{k}(X, Y)}\cdots, \notag
\end{align}
where $W_{k}(X, Y)$ is a homogeneous Lie polynomial in $X$ and $Y$ of degree $k$ \cite{Magnus}. The first few terms are
\begin{align*}
&W_{2}=-\frac{1}{2}[X, Y], \quad W_{3}=\frac{1}{3}[Y, [X, Y]]+\frac{1}{6}[X, [X, Y]], \\
&W_{4}=-\frac{1}{24}[X, [X, [X, Y]]]-\frac{1}{8}([Y, [X, [X, Y]]]+[Y, [Y, [X, Y]]]).
\end{align*}
There are several methods to compute $W_{k}$ \cite{Suzuki,Weyrauch,Scholz,Ki}. In particular, a recursive algorithm has been proposed in \cite{Casas} to express directly $W_{k}$ with the minimum number of independent commutators required at each degree $k$.

Similar to the BCH formula, the Zassenhaus formula is useful in many different fields: $q$-analysis in quantum groups \cite{Quesne}, quantum nonlinear optics \cite{Quesada}, the Schr\"{o}dinger equation in the semiclassical regime \cite{Bader}, and splitting methods in numerical analysis \cite{Geiser}, etc.

We now consider the multivariate BCH and Zassenhaus formulas. It is easy to obtain the multivariable BCH formula by repeatedly using the usual BCH: %from a formal power series expansion of the exponential
\begin{align} \label{x1}
e^{X_{1}}e^{X_{2}}\cdots e^{X_{n}}=e^{X_{1}+X_{2}+\cdots +X_{n}+\sum_{m=2}^{\infty}Z_{m}(X_{1}, X_{2}, \cdots , X_{n})},
\end{align}
where $Z_m$ is a Lie polynomial in the $X_i$ of degree $m$.
On the hand, we also have the multivariable Zassenhaus formula
\begin{align} \label{x2}
e^{X_{1}+X_{2}+\cdots +X_{n}}=e^{X_{1}}e^{X_{2}}\cdots e^{X_{n}}\prod_{k=2}^{\infty}e^{W_{k}}
\end{align}
where the product is ordered and $W_{k}$ is a homogeneous Lie polynomial in the $X_{i}$ of degree $k$. However, it is more complicated to express $W_k$ in terms of $X_i's$.

The existence of the formula \eqref{x2} is a consequence of Eq. \eqref{x1}. In fact, it is clear that $e^{-X_{1}}e^{X_{1}+X_{2}+\cdots +X_{n}}=e^{X_{2}+X_{3}+\cdots +X_{n}+D}$, where $D$ involves Lie polynomials of degree $>1$. Then $e^{-X_{2}}e^{X_{2}+X_{3}+\cdots +X_{n}+D}=e^{X_{3}+X_{4}+\cdots +X_{n}+W_{2}'+D_{1}}$, where $D_{1}$ is an infinite Lie power series in the $X_i$ with minimum degree $>2$. Note that $W_{2}'\neq W_{2}$, we need to repeat the process $(n-1)$ times to determine $W_{2}$, i.e.
\begin{align*}
&\quad e^{-X_{n-1}}e^{X_{n-1}+X_{n}+W_{2}^{(n-3)}+W_{3}^{(n-4)}+\cdots +W_{n-2}'+D_{n-3}}\\
&=e^{X_{n}+W_{2}+W_{3}^{(n-3)}+\cdots +W_{n-2}''+W_{n-1}'+D_{n-2}}
\end{align*}
where $D_{n-2}$ involves Lie polynomials of degree $>(n-1)$. Finally, we can get the formula \eqref{x2} by repeating the process.

In this paper, we will give a new recursive algorithm to compute $W_{k}$ in \eqref{x2}. Our method is inspired by the recent algorithm in \cite{Casas}, and our formula is based on a new formula for $f_{1, k}$ using compositions of integers.
%{\color{red} Our method is different, can be better etc.}.

The paper is organized as follows. %The outline of the paper is as follows.
In Section 2, we give our recursive algorithm and a concrete procedure to compute $W_{k}, k=1, 2, 3, 4, 5$. In Section 3. we establish
a combinatorial formula of $f_{1, k}$ (see Theorem \ref{3.1}). We will show that our formula can give a slightly better recursion formula of $W_{k}$ when $k>5$ in Theorem \ref{3.2}.
Finally we use examples to show how the $f_{1, k}$ are used to derive Lie polynomial formulas of $W_k$ in terms of the
operators $X_1, \ldots, X_n$. The latter set of formulas are expected be useful in the quantum control problem.

\section{Multivariable Zassenhaus terms}

\subsection{A recurrence.}
%\subsection{A recurrence}
For the operators $X_1, \ldots, X_n$ we consider the following function of $t$:
\begin{equation}
e^{t(X_{1}+X_{2}+\cdots+X_{n})}=e^{t X_{1}}e^{t X_{2}}\cdots e^{t X_{n}}e^{t^{2}W_{2}}e^{t^{3}W_{3}}\cdots .
\end{equation}
where the $W_k$ can be determined by differential equations step by step, and it is easy to see that
$W_k$ is a polynomial of degree $k$ in the $X_i$. Note that
the multivariable Zassenhaus formula \eqref{x2} is the case when $t=1$.

First we consider the iterated system of equations\\
\begin{align} \label{111}
R_{1}(t)&=e^{-t X_{n}}\cdots e^{-t X_{2}}e^{-t X_{1}}e^{t(X_{1}+X_{2}+\cdots+X_{n})},\\ \label{222}
R_{m}(t)%&=e^{-t^{m}W_{m}}\cdots e^{-t^{2}W_{2}}e^{-t X_{n}}\cdots e^{-t X_{2}}e^{-t X_{1}}e^{t(X_{1}+X_{2}+\cdots+X_{n})} \\
&=e^{-t^{m}W_{m}}R_{m-1}(t), \qquad m\geq2. %\notag
\end{align}

It follows from \eqref{222} that\\
\begin{equation} \label{333}
R_{m}(t)=e^{t^{m+1}W_{m+1}}e^{t^{m+2}W_{m+2}}\cdots , \qquad m\geq1.
\end{equation}
We then take the logarithmic differentiation\\
\begin{equation} \label{444}
F_{m}(t)=R^{\prime}_{m}(t)R_{m}(t)^{-1}%(\frac{d}{dt}R_{m}(t))R_{m}(t)^{-1},
\qquad m\geq1.
\end{equation}

For $m=1$, we have that %and taking into account the expression of $R_{1}(t)$ given in \eqref{111}, we get\\
\begin{align}
F_{1}(t)%&=(\frac{d}{dt}R_{1}(t))R_{1}(t)^{-1}\\
%&=-X_{n}-e^{-t ad_{X_{n}}}X_{n-1}-e^{-t ad_{X_{n}}}e^{-t ad_{X_{n-1}}}X_{n-2} -\cdots \notag \\
&=-X_{n}-e^{-t ad_{X_{n}}}X_{n-1}-e^{-t ad_{X_{n}}}e^{-t ad_{X_{n-1}}}X_{n-2}-\cdots -e^{-t ad_{X_{n}}}\cdots e^{-t ad_{X_{2}}}X_{1} \notag \\
%&\quad -e^{-t ad_{X_{n}}}e^{-t ad_{X_{n-1}}}\cdots e^{-t ad_{X_{3}}}X_{2} \notag \\
%&\quad -e^{-t ad_{X_{n}}}e^{-t ad_{X_{n-1}}}\cdots e^{-t ad_{X_{2}}}X_{1} \notag \\
&\quad +e^{-t ad_{X_{n}}}e^{-t ad_{X_{n-1}}}\cdots e^{-t ad_{X_{1}}}(X_{1}+X_{2}+\cdots +X_{n}) \notag \\
&=e^{-t ad_{X_{n}}}(e^{-t ad_{X_{n-1}}}\cdots e^{-t ad_{X_{2}}}e^{-t ad_{X_{1}}}-I)X_{n} \notag \\
&\quad +e^{-t ad_{X_{n}}}e^{-t ad_{X_{n-1}}}(e^{-t ad_{X_{n-2}}}\cdots e^{-t ad_{X_{2}}}e^{-t ad_{X_{1}}}-I)X_{n-1} \notag \\
&\quad +e^{-t ad_{X_{n}}}e^{-t ad_{X_{n-1}}}e^{-t ad_{X_{n-2}}}(e^{-t ad_{X_{n-3}}}\cdots e^{-t ad_{X_{2}}}e^{-t ad_{X_{1}}}-I)X_{n-2}+\cdots \notag \\
&\quad +e^{-t ad_{X_{n}}}e^{-t ad_{X_{n-1}}}\cdots e^{-t ad_{X_{3}}}(e^{-t ad_{X_{2}}}e^{-t ad_{X_{1}}}-I)X_{3} \notag \\
&\quad +e^{-t ad_{X_{n}}}e^{-t ad_{X_{n-1}}}\cdots e^{-t ad_{X_{2}}}(e^{-t ad_{X_{1}}}-I)X_{2} \notag \\
%&=\sum_{i=2}^{n}\sum_{\substack{j_{m}=0, m\geq i\\j_{1}+j_{2}+\cdots +j_{i-1}\geq 1\\j_{1}+j_{2}+\cdots +j_{n}=k}}\frac{(-t)^{k}}{j_{1}!j_{2}!\cdots j_{n}!}ad_{X_{n}}^{j_{n}}\cdots ad_{X_{2}}^{j_{2}}ad_{X_{1}}^{j_{1}}X_{i}, \notag\\
&=\sum_{k=1}^{\infty}(-t)^{k}\sum_{i=2}^{n}\sum_{\substack{j_{1}+\cdots +j_{i-1}\geq 1\\j_{1}+\cdots +j_{n}=k}}\frac{ad_{X_{n}}^{j_{n}}\cdots ad_{X_{2}}^{j_{2}}ad_{X_{1}}^{j_{1}}}{j_{1}!j_{2}!\cdots j_{n}!}X_{i}, \notag\end{align}
where $ad_{A}B=[A, B]$ and we have used the well-known formula
\begin{align*}
e^{A}Be^{-A}=e^{ad_{A}}B=\sum_{n\geq 0}\frac{1}{n!}ad_{A}^{n}B,
\end{align*}
as well as the fact that $e^{ad_X}X=X$.
Write
\begin{align} \label{555}
F_{1}(t)=\sum_{k=1}^{\infty}f_{1, k}t^{k},
\end{align}
then
\begin{align} \label{666}
f_{1, k}=(-1)^{k}\sum_{i=2}^{n}\sum_{\substack{j_{1}+\cdots +j_{i-1}\geq 1\\j_{1}+\cdots +j_{n}=k}}\frac{ad_{X_{n}}^{j_{n}}\cdots ad_{X_{2}}^{j_{2}}ad_{X_{1}}^{j_{1}}}{j_{1}!j_{2}!\cdots j_{n}!}X_{i}.
\end{align}

A similar expansion can be obtained for $F_{m}(t)$, $m\geq 2$, by using $R_{m}(t)$ in \eqref{222}. %and the relation \eqref{444}.
More specifically,
\begin{align*} \label{777}
F_{m}(t)&=-mW_{m}t^{m-1}+e^{-t^{m}W_{m}}R_{m-1}^{\prime}(t)R^{-1}_{m-1}(t)e^{t^{m}W_{m}}\\
%&=-mW_{m}t^{m-1}+e^{-t^{m}W_{m}}F_{m-1}(t)e^{t^{m}W_{m}}\\
&=-mW_{m}t^{m-1}+e^{-t^{m}ad_{W_{m}}}F_{m-1}(t)\\
&=e^{-t^{m}ad_{W_{m}}}(F_{m-1}(t)-mW_{m}t^{m-1}).
\end{align*}

Writing $F_{m}(t)=\sum_{k=m}^{\infty}f_{m, k}t^{k}$, we immediately get that
\begin{align}
f_{m, k}=\sum_{j=o}^{[\frac{k}{m}]-1}\frac{(-1)^{j}}{j!}ad_{W_{m}}^{j}f_{m-1, k-mj}, \qquad k\geq m
\end{align}
where $[\frac{k}{m}]$ denotes the integer part of $\frac{k}{m}$.

%On the other hand, if we take the logarithmic derivative of $R_1(t)$ using the expression \eqref{333}, we arrive at
%\begin{align} \label{888}
%F_{1}(t)=2W_{2}t+\sum_{j=3}^{\infty}jt^{j-1}e^{t^{2}ad_{W_{2}}}\cdots e^{t^{j-1}ad_{W_{j-1}}}W_{j}.
%\end{align}
%
%Similarly for $m\geq2$ one has
On the other hand, if we take the logarithmic derivative of $R_m(t)$ using the expression \eqref{333}, we arrive at
\begin{align} \label{999}
F_{m}(t)=(m+1)W_{m+1}t^{m}+\sum_{j=m+2}^{\infty}jt^{j-1}e^{t^{m+1}ad_{W_{m+1}}}\cdots e^{t^{j-1}ad_{W_{j-1}}}W_{j}.
\end{align}

Comparing the coefficients of the terms $t$, $t^{2}$, $t^{3}$ and $t^4$ in \eqref{555} and \eqref{999} for $F_1(t)$, we get that
\begin{align*}
f_{1, 1}=2W_{2}, \quad
f_{1, 2}=3W_{3}, \quad
f_{1, 3}=4W_{4}, \quad
f_{1, 4}=5W_5+3[W_2, W_3],
\end{align*}
so that
\begin{align}\label{les5}
W_{2}=\frac{1}{2}f_{1, 1}, \quad
W_{3}=\frac{1}{3}f_{1, 2}, \quad
W_{4}=\frac{1}{4}f_{1,3}, \quad
W_5=\frac15 f_{1, 4}-\frac1{10}[f_{1, 1}, f_{1, 2}].
\end{align}

Similarly, comparing \eqref{777} and \eqref{999}, we get
$$f_{m, m}=(m+1)W_{m+1},$$
therefore
$$W_{m+1}=\frac{1}{m+1}f_{m, m}=\frac{1}{m+1}f_{[\frac{m}{2}],m}, \quad m\geq 4, $$
i.e. $$\quad W_{m}=\frac{1}{m}f_{[\frac{m-1}{2}], m-1}, \quad m\geq5$$

\subsection{Examples of $W_k$.}
%\subsection{Examples of $W_k$}
When $k=1$ in the expression \eqref{666},  the summation of the first $i-1$ terms is already at least $1$, so we have the formula % we calculate $f_{1, 1}$ by the $f_{1, k}$ in \eqref{666}
\begin{align*}
f_{1, 1}&=-\sum_{i=2}^{n}\sum_{\substack{j_{1}+\cdots +j_{i-1}=1}}\frac{ad_{X_{i-1}}^{j_{i-1}}\cdots ad_{X_{2}}^{j_{2}}ad_{X_{1}}^{j_{1}}}{j_{1}!j_{2}!\cdots j_{i-1}!}X_{i}\\
&=\sum_{1\leq i<j\leq n}[X_{j},X_{i}].
\end{align*}

Thus
\begin{align*}
W_{2}=\frac{1}{2}f_{1, 1}=\frac{1}{2}\sum_{1\leq i<j\leq n}[X_{j},X_{i}].
\end{align*}
Similarly for $k=2$ in \eqref{666}, we have
\begin{align*}
f_{1, 2}&=\sum_{i=2}^{n}\sum_{\substack{j_{1}+\cdots +j_{i-1}\geq 1\\j_{1}+\cdots +j_{n}=2}}\frac{ad_{X_{n}}^{j_{n}}\cdots ad_{X_{2}}^{j_{2}}ad_{X_{1}}^{j_{1}}}{j_{1}!j_{2}!\cdots j_{n}!}X_{i}\\
&=\sum_{i=2}^{n}\left(\sum_{\substack{j_{1}+\cdots +j_{i-1}=1\\i\leq l\leq n}}\frac{ad_{X_l}ad_{X_{i-1}}^{j_{i-1}}\cdots ad_{X_{1}}^{j_{1}}}{j_{1}!j_{2}!\cdots j_{i-1}!}X_{i}+
\sum_{\substack{j_{1}+\cdots +j_{i-1}=2}}\frac{ad_{X_{i-1}}^{j_{i-1}}\cdots ad_{X_{1}}^{j_{1}}}{j_{1}!j_{2}!\cdots j_{i-1}!}X_{i}\right)\\
&=\sum_{i=2}^{n}\left(\sum_{\substack{1\leq j\leq i-1\\i\leq l\leq n}}{ad_{X_l}ad_{X_{j}}}X_{i}+
\sum_{\substack{1\leq j_1\leq j_2\leq i-1\\m_1+m_2=2}}\frac{ad_{X_{j_2}}ad_{X_{j_1}}}{m_1!m_2!}X_{i}\right)\\
&=-\sum_{l\geq i>j}[X_l[X_i, X_j]]+\sum_{i>j_2\geq j_1}\frac1{m_j!}([[X_i, X_{j_2}]X_{j_1}]-[X_i, [X_{j_2}, X_{j_1}]])\\
&=-\sum_{i>j}[X_i[X_iX_j]]-2\sum_{i>j>k}[X_i[X_j X_k]]+\sum_{i>j>k}[[X_iX_{j}]X_{k}]+\sum_{i>j}\frac12[[X_iX_j]X_j]\\
&=\sum_{1\leq i<j, k\leq n}[[X_{j}, X_{i}],X_{k}]+\frac{1}{2}\sum_{1\leq i<j\leq n}[[X_{j}, X_{i}], X_{i}],
\end{align*}
where $m_i$ is the multiplicity of $j_i$.

Therefore
$$
W_{3}=\frac{1}{3}f_{1, 2}=\frac{1}{3}\sum_{1\leq i<j, k\leq n}[[X_{j}, X_{i}],X_{k}]+\frac{1}{6}\sum_{1\leq i<j\leq n}[[X_{j}, X_{i}], X_{i}].
$$

We list the first few other terms as follows:
\begin{align*}
f_{1,3}&=-(\sum_{\substack{1\leq i<j, k\leq n \\ k<l\leq n}}[X_{l}, [X_{k},[X_{i}, X_{j}]]]+\frac{1}{2}\sum_{1\leq i<j, k\leq n}[X_{k}, [X_{k},[X_{i}, X_{j}]]]\\
& \quad +\frac{1}{2}\sum_{1\leq i<j, k\leq n}[X_{k}, [X_{i},[X_{i}, X_{j}]]]+\frac{1}{6}\sum_{1\leq i<j\leq n}[X_{i}, [X_{i},[X_{i}, X_{j}]]])\\
&=\frac{1}{6}\sum_{1\leq i<j\leq n}[[[X_{j}, X_{i}], X_{i}], X_{i}]+\frac{1}{2}\sum_{1\leq i<j, k\leq n}([[[X_{j}, X_{i}], X_{i}], X_{k}]\\
& \quad +[[[X_{j}, X_{i}], X_{k}], X_{k}])+\sum_{\substack{1\leq i<j, k\leq n\\k<l\leq n}}[[[X_{j}, X_{i}], X_{k}], X_{l}],
\end{align*}
so that
\begin{align*}
W_{4}%&=\frac{1}{4}f_{1, 3}\\
&=\frac{1}{24}\sum_{1\leq i<j\leq n}[[[X_{j}, X_{i}], X_{i}], X_{i}]+\frac{1}{8}\sum_{1\leq i<j, k\leq n}([[[X_{j}, X_{i}], X_{i}], X_{k}]\\
& \quad +[[[X_{j}, X_{i}], X_{k}], X_{k}]) +\frac{1}{4}\sum_{\substack{1\leq i<j, k\leq n\\k<l\leq n}}[[[X_{j}, X_{i}], X_{k}], X_{l}].
\end{align*}
%In an analogous way one obtains for $f_{1, 4}, f_{1, 5}$:
\begin{align*}
f_{1,4}&=\frac{1}{24}\sum_{1\leq i<j\leq n}[[[[X_{j}, X_{i}], X_{i}], X_{i}], X_{i}]+\frac{1}{6}\sum_{1\leq i<j, k\leq n}([[[[X_{j}, X_{i}], X_{i}], X_{i}], X_{k}]\\
& \quad +[[[[X_{j}, X_{i}], X_{k}], X_{k}], X_{k}])+\frac{1}{4}\sum_{1\leq i<j, k\leq n}[[[[X_{j}, X_{i}], X_{i}]X_{k}]X_{k}]\\
& \quad +\frac{1}{2}\sum_{\substack{1\leq i<j, k\leq n\\k<l\leq n}}([[[[X_{j}, X_{i}], X_{k}], X_{l}], X_{l}]+[[[[X_{j}, X_{i}], X_{k}], X_{k}], X_{l}]\\
& \quad +[[[[X_{j}, X_{i}], X_{i}], X_{k}], X_{l}])+\sum_{\substack{1\leq i<j, k\leq n\\k<l<h\leq n}}[[[[X_{j}, X_{i}], X_{k}], X_{l}], X_{h}].
\end{align*}
\begin{align*}
f_{1, 5}&=\frac{1}{120}\sum_{1\leq i<j\leq n}[[[[[X_{j}, X_{i}], X_{i}], X_{i}], X_{i}], X_{i}]+\frac{1}{24}\sum_{1\leq i<j, k\leq n}([[[[[X_{j}, X_{i}], X_{i}], X_{i}], X_{i}], X_{k}]\\
& \quad +[[[[[X_{j}, X_{i}], X_{k}], X_{k}], X_{k}], X_{k}])+\frac{1}{12}\sum_{1\leq i<j, k\leq n}([[[[[X_{j}, X_{i}], X_{i}], X_{i}],X_{k}],X_{k}]\\
& \quad +[[[[[X_{j}, X_{i}], X_{i}], X_{k}], X_{k}], X_{k}])+\frac{1}{6}\sum_{\substack{1\leq i<j, k\leq n\\k<l\leq n}}([[[[[X_{j}, X_{i}], X_{i}], X_{i}],X_{k}],X_{l}]\\
& \quad +[[[[[X_{j}, X_{i}], X_{k}], X_{k}],X_{k}],X_{l}]+[[[[[X_{j}, X_{i}], X_{k}], X_{l}],X_{l}],X_{l}])\\
& \quad +\frac{1}{4}\sum_{\substack{1\leq i<j, k\leq n\\k<l\leq n}}([[[[[X_{j}, X_{i}], X_{i}], X_{k}],X_{k}],X_{l}]+[[[[[X_{j}, X_{i}], X_{k}], X_{k}],X_{l}],X_{l}]\\
& \quad +[[[[[X_{j}, X_{i}], X_{i}], X_{k}],X_{l}],X_{l}])+\frac{1}{2}\sum_{\substack{1\leq i<j, k\leq n\\k<l<h\leq n}}([[[[[X_{j}, X_{i}], X_{i}], X_{k}],X_{l}],X_{h}]\\
& \quad +[[[[[X_{j}, X_{i}], X_{k}], X_{k}],X_{l}],X_{h}]+[[[[[X_{j}, X_{i}], X_{k}], X_{l}],X_{l}],X_{h}]\\
& \quad+[[[[[X_{j}, X_{i}], X_{k}], X_{l}],X_{h}],X_{h}])+\sum_{\substack{1\leq i<j, k\leq n\\k<l<h<m\leq n}}[[[[[X_{j}, X_{i}], X_{k}], X_{l}],X_{h}],X_{m}].
\end{align*}

\section{Iteration Formulas}

To reveal the explicit rule for $f_{1, k} (k\geq 1)$ based on the computations we gave in Section 2,
we recall the definition of partitions and compositions \cite{Andrews}.

\subsection{Formulation in terms of partitions.}~
%\subsection{Formulation in terms of partitions}
A partition of a positive integer $m$ is an integral unordered decomposition $m=\lambda_1+\cdots +\lambda_l$ such that $\lambda_1\geq\cdots \geq \lambda_l>0$, denoted by $\lambda=(\lambda_1\lambda_2\dots \lambda_l)$ and $\lambda\vdash m$. Here $\lambda_i$ are called the parts and $l$ is the length of the partition.
A composition is an ordered integral decomposition of $m$: $m=\lambda_1+\cdots +\lambda_l$ such that $\lambda
_i>0$ and denoted by $\lambda \vDash m$, in another words,
compositions of $m$ are obtained by permuting the unequal parts of the associated partition of $m$. The set of partitions of
$m$ is denoted by $\mathcal{P}(m)$ and the cardinality is denoted by $p(m)$.

For example, the partitions of $4$ are:
\begin{align*}
(\lambda)^{1}&=(4), \quad &(\lambda)^{2}&=(3, 1), \quad &(\lambda)^{3}&=(2, 2),\\
(\lambda)^{4}&=(2, 1, 1), \quad &(\lambda)^{5}&=(1, 1, 1, 1).
 \end{align*}
Therefore, $p(4)=5$. The associated compositions are distinct permutations of the partitions:
$(4), (3, 1), (1, 3), (2, 2), (2, 1, 1), (1, 2, 1), (1, 1, 2), (1, 1, 1, 1)$.

Accordingly the formulas of $f_{1, k}$ go as follows.
For $f_{1, 1}$, $p(1)=1$,
$$
(\lambda)^{1}=(1): \quad \sum_{1\leq i<j\leq n}[X_{j}, X_{i}].
$$
For $f_{1, 2}$, $p(2)=2$,
\begin{align*}
&(\lambda)^{1}=(2): \quad \frac{1}{2!}\sum_{1\leq i<j\leq n}[[X_{j},X_{i}], X_{i}], \\
&(\lambda)^{2}=(1, 1): \quad \sum_{1\leq i<j, k\leq n}[[X_{j},X_{i}], X_{k}].
\end{align*}
For $f_{1, 3}$, $p(3)=3$,
\begin{align*}
&(\lambda)^{1}=(3): \quad \frac{1}{3!}\sum_{1\leq i<j\leq n}[[[X_{j}, X_{i}], X_{i}], X_{i}], \\
&(\lambda)^{2}=(2, 1): \quad \frac{1}{2!}\sum_{1\leq i<j, k\leq n}([[[X_{j}, X_{i}], X_{i}], X_{k}]+[[[X_{j}, X_{i}], X_{k}], X_{k}]), \\
&(\lambda)^{3}=(1, 1, 1): \quad \sum_{\substack{1\leq i<j, k\leq n\\k<l\leq n}}[[[X_{j}, X_{i}], X_{k}], X_{l}].
\end{align*}
For $f_{1, 4}$, $p(4)=5$,
\begin{align*}
&(\lambda)^{1}=(4): \quad \frac{1}{4!}\sum_{1\leq i<j\leq n}[[[[X_{j}, X_{i}], X_{i}], X_{i}], X_{i}], \\
&(\lambda)^{2}=(3, 1): \quad \frac{1}{3!}\sum_{1\leq i<j, k\leq n}([[[[X_{j}, X_{i}], X_{i}], X_{i}], X_{k}]+[[[[X_{j}, X_{i}], X_{k}], X_{k}], X_{k}]), \\
&(\lambda)^{3}=(2, 2): \quad \frac{1}{2!2!}\sum_{1\leq i<j, k\leq n}[[[[X_{j}, X_{i}], X_{i}], X_{k}], X_{k}], \\
&(\lambda)^{4}=(2, 1, 1): \quad \frac{1}{2!}\sum_{\substack{1\leq i<j, k\leq n\\k<l\leq n}}([[[[X_{j}, X_{i}], X_{i}], X_{k}], X_{l}]+[[[[X_{j}, X_{i}], X_{k}], X_{k}], X_{l}]\\
&\qquad \qquad \qquad \qquad \qquad \qquad +[[[[X_{j}, X_{i}], X_{k}], X_{l}], X_{l}]), \\
&(\lambda)^{5}=(1, 1 , 1, 1): \quad \sum_{\substack{1\leq i<j, k\leq n\\k<l<h\leq n}}[[[[X_{j}, X_{i}], X_{k}], X_{l}], X_{h}].
\end{align*}

We define the long commutator inductively as follows.
\begin{align*}
[X_1, X_2]&=X_1X_2-X_2X_1,\\
[X_1, X_2, X_3]&=[[X_1, X_2], X_3],\\
[X_1, X_2, X_3, \cdots, X_i]&=[[X_1, X_2], X_3, \cdots, X_i].
\end{align*}

Fix a partition $\lambda=(\lambda_1\lambda_2\cdots \lambda_l)$ of $k$, and for each composition out of $\lambda$: $(k_1k_2\cdots k_l)\models k$ which
is a rearrangement of $\lambda$ by permuting its parts, we associate the commutator
\begin{equation}\label{e:multcomm}
[X_j, X_{i_1}, \cdots, X_{i_1}, X_{i_2}, \cdots, X_{i_2}, \cdots, X_{i_l}, \cdots, X_{i_l}]
\end{equation}
where the multiplicity of $i_s$ is $k_s$ for $1\leq s\leq l$. For this reason, we will write \eqref{e:multcomm} as $[X_jX_{i_1}^{k_1}X_{i_2}^{k_2}\cdots X_{i_l}^{k_l}]$. Then we have the following result.

\begin{theorem}
\label{3.1} For each $k$, the following formula holds
\begin{align*}
f_{1, k}=\sum_{(k_1\cdots k_l)\models k}\frac{1}{k_{1}!k_{2}!\cdots k_{l}!}\sum_{\substack{1\leq i_{1}<j, i_{2}\leq n\\i_{2}<i_{3}<\cdots <i_{l}\leq n}}[X_jX_{i_1}^{k_1}X_{i_2}^{k_2}\cdots X_{i_l}^{k_l}]
\end{align*}
\end{theorem}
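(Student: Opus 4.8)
The plan is to deduce Theorem~\ref{3.1} from the explicit expansion \eqref{666} of $f_{1,k}$ by a single re-indexing of the sum, the long-commutator notation serving only to repackage strings of $ad$'s. First I would record the elementary sign identity
$$[X_jX_{i_1}^{k_1}X_{i_2}^{k_2}\cdots X_{i_l}^{k_l}]=(-1)^{k}\,ad_{X_{i_l}}^{k_l}\cdots ad_{X_{i_2}}^{k_2}ad_{X_{i_1}}^{k_1}X_j,\qquad k=k_1+\cdots+k_l,$$
proved by induction on $k$ via $[u,X]=-ad_X u$: each of the $k$ left-nested brackets contributes one factor $-ad$, and the $k_s$ brackets taken against $X_{i_s}$ produce $ad_{X_{i_s}}^{k_s}$ sitting to the left of $ad_{X_{i_{s-1}}}^{k_{s-1}}\cdots ad_{X_{i_1}}^{k_1}$ because they occur after the brackets against $X_{i_1},\dots,X_{i_{s-1}}$. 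Substituting this into the right-hand side of Theorem~\ref{3.1} factors out a global sign $(-1)^{k}$ matching the one already present in \eqref{666}; after cancelling it the assertion becomes the purely combinatorial identity
$$\sum_{i=2}^{n}\ \sum_{\substack{j_{1}+\cdots +j_{i-1}\geq 1\\ j_{1}+\cdots +j_{n}=k}}\frac{ad_{X_{n}}^{j_{n}}\cdots ad_{X_{1}}^{j_{1}}}{j_{1}!\cdots j_{n}!}X_{i}=\sum_{(k_1\cdots k_l)\models k}\frac{1}{k_{1}!\cdots k_{l}!}\ \sum_{\substack{1\leq i_{1}<i_2<\cdots<i_l\leq n\\ i_{1}<j\leq n}}ad_{X_{i_l}}^{k_l}\cdots ad_{X_{i_1}}^{k_1}X_{j}.$$

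The heart of the argument is a bijection between the labelling sets of the two sums. To a left-hand term, recorded by a pair $\big(i,(j_1,\dots,j_n)\big)$ with $\sum_{m<i}j_m\ge1$ and $\sum_m j_m=k$, I would associate the data $\big(l;\,(k_1,\dots,k_l);\,(i_1,\dots,i_l);\,j\big)$ in which $i_1<i_2<\cdots<i_l$ enumerates the support $\{m:j_m>0\}$ (nonempty since $k\ge1$), $k_s:=j_{i_s}$, and $j:=i$. Because $ad_{X_m}^{0}=I$ and $0!=1$, the operator $ad_{X_{n}}^{j_{n}}\cdots ad_{X_{1}}^{j_{1}}X_i$ equals $ad_{X_{i_l}}^{k_l}\cdots ad_{X_{i_1}}^{k_1}X_j$ and the multinomial weight is unchanged; the support is automatically strictly increasing, so $(k_1,\dots,k_l)\models k$ and $1\le i_1<\cdots<i_l\le n$; and $\sum_{m<i}j_m\ge1$ says exactly that the support meets $\{1,\dots,i-1\}$, i.e.\ $i_1<i=j$. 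Conversely, an admissible right-hand datum is sent back to $i:=j$ together with the exponent vector defined by $j_{i_s}:=k_s$ and $j_m:=0$ otherwise, for which $\sum_m j_m=k$ and $\sum_{m<i}j_m\ge j_{i_1}=k_1\ge1$ because $i_1<j$. The two assignments are mutually inverse and carry each summand to the equal summand, so the two sums coincide; together with the sign identity this gives Theorem~\ref{3.1}.

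No analytic input is required: for each fixed $k$ both sides are finite linear combinations of iterated commutators of degree $k$. The two places that demand care are the sign/ordering bookkeeping in the long-commutator identity, and the verification that the single inequality $j_{1}+\cdots +j_{i-1}\geq 1$ in \eqref{666} is \emph{exactly} the strict inequality $i_1<j$ appearing in Theorem~\ref{3.1}---in particular that no constraint relating $j$ to the larger indices $i_2,\dots,i_l$ is needed or inadvertently lost. I would cross-check the bijection and the normalisation against the cases $k=1,2,3,4,5$ computed in Section~2.
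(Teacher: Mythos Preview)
Your argument is correct. The paper does not actually supply a formal proof of Theorem~\ref{3.1}: it states the result after exhibiting the pattern for $k=1,2,3,4$ in Section~3.1 and after the sample manipulation of $f_{1,2}$ in Section~2.2, where the sum \eqref{666} is split according to whether the extra exponent lies above or below $i$ and then collapsed by removing zero exponents. Your bijection is precisely the general, rigorous form of that manipulation---identifying the support $\{m:j_m>0\}$ with the strictly increasing tuple $(i_1,\dots,i_l)$, the nonzero exponents with a composition of $k$, and translating the condition $j_1+\cdots+j_{i-1}\ge1$ into $i_1<j$. The sign identity $[X_jX_{i_1}^{k_1}\cdots X_{i_l}^{k_l}]=(-1)^{k}ad_{X_{i_l}}^{k_l}\cdots ad_{X_{i_1}}^{k_1}X_j$ is the step the paper leaves entirely implicit in its long-commutator notation, and your inductive justification of it is fine.

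In short, your proposal and the paper follow the same route (re-index \eqref{666} by supports), but you make explicit and rigorous what the paper only illustrates by low-degree examples; there is no genuinely different idea on either side.
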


\subsection{Determination of $W_{m}(m\geq 6)$.}~
%\subsection{Determination of $W_{n}(n\geq 5)$}
We have given the formulas of $W_k$ for $1\leq k\leq 5$ in terms of $f_{1, k}$ \eqref{les5}. We now give the next a few terms as follows.

\begin{align}
\quad W_{6}&=\frac{1}{6}(f_{1, 5}-ad_{W_{2}}f_{1, 3}). \label{2}\\
\quad W_{7}&=\frac{1}{7}(f_{1, 6}-ad_{W_{2}}f_{1, 4}+\frac{1}{2!}ad_{W_{2}}^{2}f_{1, 2}-ad_{W_{3}}f_{1, 3}). \label{3}\\
\quad W_{8}&=\frac{1}{8}(f_{1, 7}-ad_{W_{2}}f_{1, 5}+\frac{1}{2!}ad_{W_{2}}^{2}f_{1, 3}-ad_{W_{3}}f_{1, 4}+ad_{W_{3}}ad_{W_{2}}f_{1, 2}). \label{4}\\
\quad W_{9}&=\frac{1}{9}(f_{1, 8}-ad_{W_{2}}f_{1, 6}+\frac{1}{2!}ad_{W_{2}}^{2}f_{1, 4}-\frac{1}{3!}ad_{W_{2}}^{3}f_{1, 2}-ad_{W_{3}}f_{1, 5} \label{5}\\
&\qquad +ad_{W_{3}}ad_{W_{2}}f_{1, 3}-ad_{W_{4}}f_{1, 4}+ad_{W_{4}}ad_{W_{2}}f_{1, 2}). \notag\\
\label{6}
W_{10}&=\frac{1}{10}(f_{1, 9}-ad_{W_{2}}f_{1, 7}+\frac{1}{2!}ad_{W_{2}}^{2}f_{1, 5}-\frac{1}{3!}ad_{W_{2}}^{3}f_{1, 3}-ad_{W_{3}}f_{1, 6}\\
&\qquad +ad_{W_{3}}ad_{W_{2}}f_{1, 4}-\frac{1}{2!}ad_{W_{3}}ad_{W_{2}}^{2}f_{1, 2}+\frac{1}{2!}ad_{W_{3}}^{2}f_{1, 3} \notag\\
&\qquad -ad_{W_{4}}f_{1, 5}+ad_{W_{4}}ad_{W_{2}}f_{1, 3}). \notag
\end{align}

We postpone the verification of these formulas till the general result.
The following result gives the general iterative formula for the multivariable Zassenhaus formula.
\begin{theorem} \label{3.2} For each $k\geq 2$ the exponents $W_{m}$ in the multi-variable Zassenhaus formula \eqref{x2}
for $m=6k+i$, where $i=0, 1, 2, 3, 4, 5$ are given by\\
\begin{align}
%&n=6k, k\geq 2, \label{7}\\
W_{6k}=&\frac{1}{6k}(f_{2k-2, 6k-1}-ad_{W_{2k-1}}f_{2k-2, 4k}+\frac{1}{2!}ad_{W_{2k-1}}^{2}f_{2k-2, 2k+1}-ad_{W_{2k}}f_{2k-2, 4k-1} \notag\\
&+ad_{W_{2k}}ad_{W_{2k-1}}f_{2k-2, 2k}-ad_{W_{2k+1}}f_{2k-2, 4k-2}+ad_{W_{2k+1}}ad_{W_{2k-1}}f_{2k-2, 2k-1} \notag\\
&-ad_{W_{2k+2}}f_{2k-2, 4k-3}-ad_{W_{2k+3}}f_{2k-2, 4k-4}-\cdots -ad_{W_{3k-1}}f_{2k-2, 3k}). \label{7}\\
%&n=6k+1, k\geq 2, \label{8}\\
W_{6k+1}=&\frac{1}{6k+1}(f_{2k-1, 6k}-ad_{W_{2k}}f_{2k-1, 4k}+\frac{1}{2!}ad_{W_{2k}}^{2}f_{2k-1, 2k}-ad_{W_{2k+1}}f_{2k-1, 4k-1} \qquad \notag\\
&-ad_{W_{2k+2}}f_{2k-1, 4k-2} -ad_{W_{2k+3}}f_{2k-1, 4k-3}-\cdots -ad_{W_{3k}}f_{2k-1, 3k}).  \label{8}\\ %\notag
%&n=6k+2, k\geq 2, \label{9}\\
W_{6k+2}=&\frac{1}{6k+2}(f_{2k-1, 6k+1}-ad_{W_{2k}}f_{2k-1, 4k+1}+\frac{1}{2!}ad_{W_{2k}}^{2}f_{2k-1, 2k+1}-ad_{W_{2k+1}}f_{2k-1, 4k} \notag\\
&+ad_{W_{2k+1}}ad_{W_{2k}}f_{2k-1, 2k}-ad_{W_{2k+2}}f_{2k-1, 4k-1}-ad_{W_{2k+3}}f_{2k-1, 4k-2} \notag\\
&-ad_{W_{2k+4}}f_{2k-1, 4k-3}-\cdots -ad_{W_{3k}}f_{2k-1, 3k+1}). \label{9}\\ %\notag
%&n=6k+3, k\geq 2, \label{10}\\
W_{6k+3}=&\frac{1}{6k+3}(f_{2k-1, 6k+2}-ad_{W_{2k}}f_{2k-1, 4k+2}+\frac{1}{2!}ad_{W_{2k}}^{2}f_{2k-1, 2k+2} \notag\\
&-ad_{W_{2k+1}}f_{2k-1, 4k+1}+ad_{W_{2k+1}}ad_{W_{2k}}f_{2k-1, 2k+1}-ad_{W_{2k+2}}f_{2k-1, 4k} \notag\\
&+ad_{W_{2k+2}}ad_{W_{2k}}f_{2k-1, 2k}-ad_{W_{2k+3}}f_{2k-1, 4k-1}-ad_{W_{2k+4}}f_{2k-1, 4k-2} \notag\\
&-\cdots -ad_{W_{3k+1}}f_{2k-1, 3k+1}). \label{10}
\end{align}
\begin{align}
%&n=6k+4, k\geq 2, \label{11}\\
W_{6k+4}=&\frac{1}{6k+4}(f_{2k, 6k+3}-ad_{W_{2k+1}}f_{2k, 4k+2}+\frac{1}{2!}ad_{W_{2k+1}}^{2}f_{2k, 2k+1}-ad_{W_{2k+2}}f_{2k, 4k+1} \notag\\
&-ad_{W_{2k+3}}f_{2k, 4k} -ad_{W_{2k+4}}f_{2k, 4k-1}-\cdots -ad_{W_{3k+1}}f_{2k, 3k+2}). \label{11}\\
%&n=6k+5, k\geq 1, \label{12}\\
W_{6k+5}=&\frac{1}{6k+5}(f_{2k, 6k+4}-ad_{W_{2k+1}}f_{2k, 4k+3}+\frac{1}{2!}ad_{W_{2k+1}}^{2}f_{2k, 2k+2}-ad_{W_{2k+2}}f_{2k, 4k+2} \notag\\
&+ad_{W_{2k+2}}ad_{W_{2k+1}}f_{2k, 2k+1}-ad_{W_{2k+3}}f_{2k, 4k+1}-ad_{W_{2k+4}}f_{2k, 4k} \notag\\
&-ad_{W_{2k+5}}f_{2k, 4k-1}-\cdots -ad_{W_{3k+2}}f_{2k, 3k+2}). \label{12}
\end{align}
\end{theorem}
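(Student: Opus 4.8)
The plan is to prove all six families \eqref{7}--\eqref{12} by iterating two facts already obtained in Section~2: the normalization $W_m=\frac1m f_{m-1,m-1}$, which comes from matching the two expansions of $F_{m-1}(t)$ in degree $t^{m-1}$, and the recursion
$$
f_{p,q}=\sum_{j=0}^{[q/p]-1}\frac{(-1)^{j}}{j!}\,ad_{W_p}^{\,j}\,f_{p-1,\,q-pj},\qquad q\ge p.
$$
Since this recursion collapses to $f_{p,q}=f_{p-1,q}$ as soon as $q<2p$, we already know $W_m=\frac1m f_{P,m-1}$ with $P=[(m-1)/2]$; writing $m=6k+i$ one has $P=3k-1,\,3k,\,3k,\,3k+1,\,3k+1,\,3k+2$ for $i=0,1,2,3,4,5$ respectively. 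For each fixed residue $i$ the proof amounts to expanding $f_{P,m-1}$ by running the recursion on the first index down to the level $N$, where $N=2k-2$ for $i=0$, $N=2k-1$ for $i=1,2,3$, and $N=2k$ for $i=4,5$, and then dividing by $m$ and reading off the coefficients.

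I would first follow the \emph{principal branch}, i.e.\ the chain $f_{P,m-1}\to f_{P-1,m-1}\to\cdots\to f_{N,m-1}$ obtained by always retaining the $j=0$ summand; it produces the term $f_{N,m-1}$, the leading summand of the formula. At the step lowering the first index from $p$ to $p-1$ with $N+2\le p\le P$ one has $[(m-1)/p]=2$, so that step contributes exactly one further term $-ad_{W_p}f_{p-1,\,m-1-p}$; at the last step, $p=N+1$, one has $[(m-1)/(N+1)]=3$, so that step contributes in addition $-ad_{W_{N+1}}f_{N,\,m-1-(N+1)}$ and $\frac1{2}ad_{W_{N+1}}^{\,2}f_{N,\,m-1-2(N+1)}$. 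These last two are precisely the second summand and the unique $\frac1{2}ad_{W}^{2}$-term appearing in each of \eqref{7}--\eqref{12}; verifying that $[(m-1)/p]$ equals $2$ throughout the bulk and equals $3$ only at $p=N+1$ is a short arithmetic check, and it is the one place where the residue $i$ genuinely enters.

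Next I would reduce each term $f_{p-1,\,m-1-p}$ produced above down to first index $N$. The value of $N$ has been arranged so that $m-1-p\le 2N+1$ for all but at most two of the smallest admissible $p$, whence for those $p$ the subchain $f_{p-1,\,m-1-p}\to\cdots\to f_{N,\,m-1-p}$ is again all-$j=0$ and contributes only $-ad_{W_p}f_{N,\,m-1-p}$; letting $p$ range over this bulk reproduces the long tails $-(ad_{W_{2k+2}}f_{N,\,\cdot}+\cdots+ad_{W_P}f_{N,\,\cdot})$ of \eqref{7}--\eqref{12}. For the at most two exceptional values of $p$ the subchain instead meets exactly one non-collapsing reduction step (one with $[\,\cdot\,]=2$) before reaching first index $N$; expanding those by hand yields the nested commutators $ad_{W_p}ad_{W_{p'}}f_{N,\,\cdot}$ displayed in the formulas, with $W_p$ automatically the outer factor because in the recursion $ad_{W_p}$ is applied to $f_{p-1,\,\cdot}$ before the latter is itself expanded. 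Summing the principal branch, the bulk tail and these exceptional contributions, and dividing by $m=6k+i$, gives \eqref{7}--\eqref{12}; the same procedure, continued down to first index $1$, reproduces \eqref{2}--\eqref{6}.

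The main obstacle is purely organizational: one must carry out the case distinction on $i \bmod 6$ carefully, account for every term so that none is dropped or double-counted, and verify the two families of elementary inequalities involved --- the exact value of $[(m-1)/p]$, and the comparison of $m-1-p$ with $2N+1$ --- for each $p$ in the relevant window. One also has to treat the smallest admissible value $k=2$ separately, where the bulk window is short and one must confirm that no subchain descends into the $f_{1,\,\cdot}$ range in a manner inconsistent with the displayed formulas. Once these finite verifications are in place, the six identities follow.
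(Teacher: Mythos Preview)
Your proposal is correct and follows essentially the same route as the paper: start from $W_m=\frac1m f_{[\frac{m-1}{2}],\,m-1}$ and iterate the recursion $f_{p,q}=\sum_{j=0}^{[q/p]-1}\frac{(-1)^j}{j!}ad_{W_p}^{\,j}f_{p-1,\,q-pj}$ downward in the first index, with the entire argument reduced to computing the integer parts $[(m-1)/p]$ and $[(m-1-p)/q]$ at each stage. The only difference is organizational---the paper first splits by the parity of $m$ and then case-analyzes the successive floor values, whereas you work directly with the residue $i=m\bmod 6$ and separate a principal branch, a bulk tail, and the exceptional nested terms---but the underlying computation and the inductive verifications are the same.
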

\begin{proof} As we know that
$W_{m}=\frac{1}{m}f_{[\frac{m-1}{2}], m-1}$, $m\geq 5$ in Section 2, we divide $m$ into even and odd integers.

When $m=2a+1, a\geq 2$,
\begin{align*}
f_{[\frac{m-1}{2}], m-1}&=f_{a, m-1}\qquad ([\frac{m-1}{a}]=2)\\
&=f_{a-1, m-1}-ad_{W_{a}}f_{a-1, a},
\end{align*}
if $a=2$, we stop the computation since we reach $f_{1, k}$.
Otherwise,
\begin{equation}
[\frac{m-1}{a-1}]=[2+\frac{2}{a-1}]=\begin{cases}
3, &  a=3; \\
2, &  a\geq 4.
\end{cases} \notag
\end{equation}
$$
[\frac{a}{a-1}]=[1+\frac{1}{a-1}]=1, \quad a\geq 3.
$$
so that
if $a=3$,
$$
f_{[\frac{m-1}{2}], m-1}=f_{a-2, m-1}-ad_{W_{a-1}}f_{a-2,a+1}+\frac{1}{2!}ad_{W_{a-1}}^{2}f_{a-2, 2}-ad_{W_{a}}f_{a-2,a},
$$
we stop the computation.
If $a\geq 4$,
$$
f_{[\frac{m-1}{2}], m-1}=f_{a-2, m-1}-ad_{W_{a-1}}f_{a-2, a+1}-ad_{W_{a}}f_{a-2, a}.
$$
Repeating the procedure, we obtain %\eqref{1},
\eqref{3} and \eqref{5} as well as \eqref{8}, \eqref{10}, \eqref{12} in the theorem by using induction. %by the observation about the results.

Similarly, when $m=2a, a\geq 3$,
\begin{align*}
f_{[\frac{m-1}{2}], m-1}&=f_{a-1, m-1}\qquad ([\frac{m-1}{a-1}]=[2+\frac{1}{a-1}]=2, a\geq 3)\\
&=f_{a-2, m-1}-ad_{W_{a-1}}f_{a-2, a},
\end{align*}
if $a=3$, we stop the computation.
Otherwise,
\begin{equation}
[\frac{m-1}{a-2}]=[2+\frac{3}{a-2}]=\begin{cases}
3, &  a=4; \\
3, &  a=5; \\
2, &  a\geq 6.
\end{cases} \notag
\end{equation}
\begin{equation}
[\frac{a}{a-2}]=[1+\frac{2}{a-2}]=\begin{cases}
2, &  a=4; \\
1, &  a\geq 5.
\end{cases} \notag
\end{equation}
so that
if $a=4$,
\begin{align*}
f_{[\frac{m-1}{2}], m-1}&=f_{a-3,m-1}-ad_{W_{a-2}}f_{a-3, a+1}+\frac{1}{2!}ad_{W_{a-2}}^{2}f_{a-3, 3}\\
& \quad -ad_{W_{a-1}}f_{a-3, a}+ad_{W_{a-1}}ad_{W_{a-2}}f_{a-3, 2},
\end{align*}
we stop the computation.
If $a=5$,
$$
f_{[\frac{m-1}{2}], m-1}=f_{a-3,m-1}-ad_{W_{a-2}}f_{a-3, a+1}+\frac{1}{2!}ad_{W_{a-2}}^{2}f_{a-3, 3}-ad_{W_{a-1}}f_{a-3, a}.
$$
If $a\geq 6$
$$
f_{[\frac{m-1}{2}], m-1}=f_{a-3,m-1}-ad_{W_{a-2}}f_{a-3, a+1}-ad_{W_{a-1}}f_{a-3, a}.
$$
Repeating the procedure, we obtain \eqref{2}, \eqref{4} and \eqref{6} as well as
\eqref{7}, \eqref{9}, \eqref{11} in Theorem \ref{3.2} using induction. 
\end{proof}

According to Theorem \ref{3.2}, we know that $W_{m}(m\geq 5)$ can be expressed as a linear combination of $f_{1, k}(k\geq 1)$ in the end, then we use $f_{1, k}(k\geq 1)$ given in Theorem \ref{3.1} to obtain $W_{m}(m\geq 5)$. To explain how this works, we
give the explicit formulas of $W_{5}, W_{6}$ according to \eqref{les5} and \eqref{2}:
\begin{small}
\begin{align*}
W_{5}=&\frac{1}{5}(f_{1, 4}-ad_{W_{2}}f_{1, 2})\\
=&\frac{1}{120}\sum_{1\leq i<j\leq n}[[[[X_{j}, X_{i}], X_{i}], X_{i}], X_{i}]+\frac{1}{30}\sum_{1\leq i<j, k\leq n}([[[[X_{j}, X_{i}], X_{i}], X_{i}], X_{k}]\\
&+[[[[X_{j}, X_{i}], X_{k}], X_{k}], X_{k}])+\frac{1}{20}\sum_{1\leq i<j, k\leq n}[[[[X_{j}, X_{i}], X_{i}]X_{k}]X_{k}]\\
&+\frac{1}{10}\sum_{\substack{1\leq i<j, k\leq n\\k<l\leq n}}([[[[X_{j}, X_{i}], X_{k}], X_{l}], X_{l}]+[[[[X_{j}, X_{i}], X_{k}], X_{k}], X_{l}]\\
& +[[[[X_{j}, X_{i}], X_{i}], X_{k}], X_{l}])+\frac{1}{5}\sum_{\substack{1\leq i<j, k\leq n\\k<l<h\leq n}}[[[[X_{j}, X_{i}], X_{k}], X_{l}], X_{h}]\\
&+\frac{1}{10}\sum_{\substack{1\leq i_{1}<j_{1}\leq n\\1\leq i_{2}<j_{2}, k_{2}\leq n}}[[[X_{j_{2}}, X_{i_{2}}], X_{k_{2}}], [X_{j_{1}}, X_{i_{1}}]]+\frac{1}{20}\sum_{\substack{1\leq i_{1}<j_{1}\leq n\\1\leq i_{3}<j_{3}\leq n}}[[[X_{j_{3}}, X_{i_{3}}], X_{i_{3}}], [X_{j_{1}}, X_{i_{1}}]].
\end{align*}
\begin{align*}
W_{6}=&\frac{1}{6}(f_{1, 5}-ad_{W_{2}}f_{1, 3})\\
=&\frac{1}{720}\sum_{1\leq i<j\leq n}[[[[[X_{j}, X_{i}], X_{i}], X_{i}], X_{i}], X_{i}]+\frac{1}{144}\sum_{1\leq i<j, k\leq n}([[[[[X_{j}, X_{i}], X_{i}], X_{i}], X_{i}], X_{k}]\\
& +[[[[[X_{j}, X_{i}], X_{k}], X_{k}], X_{k}], X_{k}])+\frac{1}{72}\sum_{1\leq i<j, k\leq n}([[[[[X_{j}, X_{i}], X_{i}], X_{i}],X_{k}],X_{k}]\\
&+[[[[[X_{j}, X_{i}], X_{i}], X_{k}], X_{k}], X_{k}])+\frac{1}{36}\sum_{\substack{1\leq i<j, k\leq n\\k<l\leq n}}([[[[[X_{j}, X_{i}], X_{i}], X_{i}],X_{k}],X_{l}]\\
& +[[[[[X_{j}, X_{i}], X_{k}], X_{k}],X_{k}],X_{l}]+[[[[[X_{j}, X_{i}], X_{k}], X_{l}],X_{l}],X_{l}])\\
& +\frac{1}{24}\sum_{\substack{1\leq i<j, k\leq n\\k<l\leq n}}([[[[[X_{j}, X_{i}], X_{i}], X_{k}],X_{k}],X_{l}]+[[[[[X_{j}, X_{i}], X_{k}], X_{k}],X_{l}],X_{l}]\\
&  +[[[[[X_{j}, X_{i}], X_{i}], X_{k}],X_{l}],X_{l}])+\frac{1}{12}\sum_{\substack{1\leq i<j, k\leq n\\k<l<h\leq n}}([[[[[X_{j}, X_{i}], X_{i}], X_{k}],X_{l}],X_{h}]\\
&  +[[[[[X_{j}, X_{i}], X_{k}], X_{k}],X_{l}],X_{h}]+[[[[[X_{j}, X_{i}], X_{k}], X_{l}],X_{l}],X_{h}]\\
& +[[[[[X_{j}, X_{i}], X_{k}], X_{l}],X_{h}],X_{h}])+\frac{1}{6}\sum_{\substack{1\leq i<j, k\leq n\\k<l<h<m\leq n}}[[[[[X_{j}, X_{i}], X_{k}], X_{l}],X_{h}],X_{m}]\\
& +\frac{1}{72}\sum_{\substack{1\leq i_{1}<j_{1}\leq n\\1\leq i_{2}<j_{2}\leq n}}[[[[X_{j_{2}}, X_{i_{2}}], X_{i_{2}}],  X_{i_{2}}], [X_{j_{1}}, X_{i_{1}}]]\\
&  +\frac{1}{24}\sum_{\substack{1\leq i_{1}<j_{1}\leq n\\1\leq i_{3}<j_{3}, k_{3}\leq n}} ([[[[X_{j_{3}}, X_{i_{3}}], X_{i_{3}}],  X_{k_{3}}], [X_{j_{1}}, X_{i_{1}}]]\\
& +[[[[X_{j_{3}}, X_{i_{3}}], X_{k_{3}}],  X_{k_{3}}], [X_{j_{1}}, X_{i_{1}}]])+\frac{1}{12}\sum_{\substack{1\leq i_{1}<j_{1}\leq n\\1\leq i_{4}<j_{4}, k_{4}\leq n\\k_{4}<l_{4}\leq n}}[[[[X_{j_{4}}, X_{i_{4}}], X_{k_{4}}],  X_{l_{4}}], [X_{j_{1}}, X_{i_{1}}]].
\end{align*}
\end{small}

\centerline{\bf Acknowledgments}

{N. Jing's work was partially supported by the National Natural Science Foundation of China (Grant No.11531004) and
Simons Foundation (Grant No. 523868).}

\bigskip

\end{document}